\definecolor{myblue}{RGB}{0,51,157}
\renewcommand{\theequation}{\arabic{section}.\arabic{equation}}
\let \ssection=\section
\renewcommand{\section}{\setcounter{equation}{0}\ssection}
\newcommand{\ds}[1]{{\displaystyle{#1 }}}
\renewcommand{\leq}{\leqslant}
\renewcommand{\geq}{\geqslant}
\newcommand{\sgn}{\mathrm{sgn}}
\newtheorem{thm}{Theorem}[section]%
\newtheorem{prop}[thm]{Proposition}
\newtheorem{defi}[thm]{Definition}
\newtheorem{rk}[thm]{Remark}
\newtheorem{exple}[thm]{Example}
\newcommand{\ind}{\mathds{1}}
\newcommand{\dd}{{\mathrm d}}
\newcommand{\dN}{\mathbb{N}}
\newcommand{\dP}{\mathbb{P}}
\newcommand{\dR}{\mathbb{R}}
\newcommand{\cA}{\mathcal{A}}
\newcommand{\cB}{\mathcal{B}}
\newcommand{\cC}{\mathcal{C}}
\newcommand{\cL}{\mathcal{L}}
\newcommand{\cQ}{\mathcal{Q}}
\newcommand{\cS}{\mathcal{S}}
\newcommand{\bX}{\mathbf{X}}
\newcommand{\bY}{\mathbf{Y}}
\newcommand{\bZ}{\mathbf{Z}}
\newcommand{\bn}{\mathbf{n}}
\newcommand{\bx}{\mathbf{x}}
\newcommand{\by}{\mathbf{y}}
\newcommand{\bz}{\mathbf{z}}
\newcommand{\rd}{\mathrm{d}}
\newcommand{\ABS}[1]{{{\left| #1 \right|}}} 
\newcommand{\BRA}[1]{{{\left\{#1\right\}}}} 
\newcommand{\NRM}[1]{{{\left\| #1\right\|}}} 
\newcommand{\PAR}[1]{{{\left(#1\right)}}} 
\newcommand{\SBRA}[1]{{{\left[#1\right]}}} 
\newcommand{\DSBRA}[1]{{{\llbracket#1\rrbracket}}} 
\title{Well-posedness of state-dependent rank-based interacting systems}
\author{H\'el\`ene Gu\'erin \footnote{Département de Mathématiques, Université du Québec à Montréal (UQAM), Montréal, QC H2X 3Y7, Canada; guerin.helene@uqam.ca}
\and
Nathalie Krell \footnote{Univ Rennes, IRMAR – UMR CNRS 6625, F-35000 Rennes, France; nathalie.krell@univ-rennes2.fr} 
}
\begin{document}

\maketitle

\begin{abstract}
We study the existence and uniqueness of rank-based interacting systems of stochastic differential equations. These systems can be seen as modifications with state-dependent coefficients of the Atlas model in mathematical finance. The coefficients of the underlying SDEs are possibly discontinuous. We first establish strong well-posedness for a planar system with rank-dependent drift coefficients, and non-rank-dependent and non-uniformly elliptic diffusion coefficients. We then state weak well-posedness for two classes of high-dimensional rank-based interacting SDEs with elliptic diffusion coefficients. Finally, we address the positivity of solutions in the case where the diffusion coefficients vanish at zero.
\end{abstract}

\noindent \textit{MSC2020 subject classifications:} 60H10, 60J60, 60K35. 

\noindent \textit{Key words and phrases:} Atlas model; Stochastic rank-based interacting system; Weak and strong well-posedness; SDEs with discontinuous coefficients; non-elliptic diffusion coefficient.\\

\section{Introduction}
We focus in this article on finite systems of diffusive interacting particles driven by stochastic differential equations with coefficients depending on the rank of each particle in the population. A famous such system of stochastic differential equations (SDEs), called the Atlas model, has been introduced by Fernholz, \cite {Fer02}, to model equity markets in finance. Since then many questions related to this model have been studied, such as strong well-posedness \cite{FIKP13,IKS13}, propagation of chaos \cite{JR13, Rey17,KS18}, infinite size case \cite{banerjee2025}, invariant measure  \cite{PalPitman2008, IKS13,tsai2018}, large deviation principle \cite{dembo16}, to name a few. This stochastic system is constructed from two families of constants to formulate the drift and diffusion coefficients, and a vector of independent Brownian motions. In this work, we aim to take the first steps toward extending the classic Atlas model to SDEs with coefficients that also depend on the processes.

To motivate the need to generalize the  Atlas model, one can mention the presentation  of Mahé and Bugeon, \cite{bugeon-mahe18}, and  \cite{MullerFeuga1990, Katsanevakis2006, Dampin2012} on  fish growth in a fish pond. Let us assume that the population is of constant size $N\geq 2$ in the fish pond.
We can easily imagine that dominant fish will grow faster than dominated fish, since food is distributed in limited quantities. It is thus natural to consider a system of rank-based interacting diffusion processes to model the fish growth: the coefficients of the stochastic differential equation satisfied by the weight of each fish will depend on its rank in the population. However, in such a model, it is unrealistic to have a family of drift and diffusion coefficients that does not depend on fish weights, as is the case in the Atlas model.

A natural extension of the Atlas model is the following. For $t\geq 0$, let $\bX_t=(X^{1}_t,\ldots,X^{N}_t)\in\dR^N$, where $X^{i}_t$ can be seen, for example, as the weight of the $i^\text{th}$ fish at time $t$ in the population of a fish pond.
We assume that the behavior of the $i^\text{th}$ coordinate satisfies the following stochastic differential equation 
\begin{align}\label{eq:EDS-poisson}
    \dd X^{i}_t&=\sum_{k=1}^Nb_k(X^{i}_t)\ind_{X^{i}_t=X^{(k)}_t}\dd t
+\sum_{k=1}^N\sigma_k(X^{i}_t)\ind_{X^{i}_t=X^{(k)}_t}\dd W^i_t,
\end{align}
where 
$\PAR{W^i}_{1\leq i\leq N}$ are $N$ independent Brownian motions, and $X^{(1)},X^{(2)},\ldots,X^{(N)}$ are the order statistics of the N-tuple $(X^{1},X^{2},\ldots,X^{N})$: for all $t\geq 0$,
\[
\min_{1\leq i\leq N}X^{i,N}_t=X^{(1)}_t\leq X^{(2)}_t\leq \ldots\leq X^{(N)}_t=\max_{1\leq i\leq N}X^{i,N}_t,
\]
with initial condition
$\bX_0=(X_0^{1},\ldots, X_0^{N})\in \mathrm{L}^2(\dR^N)$ with $X_0^{1}<X_0^{2}<\ldots<X_0^{N}$.

\medskip
The corresponding $N$-dimensional SDE linked to ~\eqref{eq:EDS-poisson} satisfied by $\bX$ has discontinuous drift and diffusion coefficients. Even when the families $\PAR{b_1,\ldots,b_N}$ and $(\sigma_1,\ldots,\sigma_N)$ are smooth bounded functions on $\dR^N$, the well-posedness of this system is not clear. 
In the classical Atlas model, these families are constant functions. Weak existence and uniqueness of a solution to the Atlas model can be deduced from Bass and Pardoux's article \cite{BP87} on diffusions with piecewise constant coefficients, as mentioned in \cite{BFK05, FK09}. In a sequence of articles, the strong existence has been studied, and obtained first for $N=2$ in \cite{FIKP13}, then for any $N\geq 3$ in \cite{IKS13}. In the case $N\geq 3$, Ichiba et al. obtained the strong existence up to the first triple collision time of the system. They also identified a condition on the diffusion coefficients to avoid such a triple collision. 
A more general setting, such as Equation~\eqref{eq:EDS-poisson}, is difficult to tackle. We can mention the work of Itkin and Larsson \cite{ItkinLarsson21}, but an assumption of their main result is the continuity of the coefficients, which is not the case in the Atlas model and its generalization~\eqref{eq:EDS-poisson}.

The system~\eqref{eq:EDS-poisson} can be simplified in different ways, and, for example, we can consider the following rank-based interaction system, in which the diffusion coefficients do not depend on the rank: 
\begin{align}\label{eq:EDS-poisson-2}
    \dd X^{i}_t&=\sum_{k=1}^Nb_k(X^{i}_t)\ind_{X^{i}_t=X^{(k)}_t}\dd t
+\sigma_i(X^{i}_t)\dd W^i_t.
\end{align}

The stochastic systems~ \eqref{eq:EDS-poisson} and ~\eqref{eq:EDS-poisson-2} belong to the class of SDEs with discontinuous drifts. There is a wide literature on the well-posedness of this class of SDEs; without claiming to be exhaustive, we can mention \cite{Veretennikov81,IW89,gao93,krylov04,SV06}, and the recent article \cite{Krylov21}. Finally, we mention the different works \cite{LTS15,Leobacher-Szolgyenyi2017,Leobacher-Szolgyenyi2019-corr} on the strong existence of SDEs with discontinuous drifts. In all of these articles, uniform ellipticity of the diffusion term is crucial in the proofs.

\medskip
However, recall that our primary motivation was modeling the weights of  fish in a fish pond. As such quantities are nonnegative, we need to ensure that the process $\bX$ stays in the nonnegative cone $\dR_+^N$. One way to ensure this is to assume that the diffusion coefficients cancel out as soon as a coordinate reaches zero (see Section~\ref{App:nonnegativity}), as in a stochastic logistic model \cite{Braumann19} or Cox–Ingersoll–Ross 
 model \cite{CIR85,heston93}. Consequently, we want to avoid the uniform ellipticity assumption on the diffusion term for the well-posedness of the equation. The general case $N\geq 2$ being difficult to tackle, we focus on $N=2$, as it was first studied to obtain the strong existence and uniqueness of the Atlas model.

\medskip 
The main contribution of this article is to prove the strong existence and uniqueness of the solution to \eqref{eq:EDS-poisson-2} for $N=2$ for a general class of drift and diffusion coefficients that include logistic-type models.
The proof, presented in Section~\ref{sec:main result}, is greatly inspired by \cite{Leobacher-Szolgyenyi2017}, although our coefficients do not satisfy their assumptions, including the uniform ellipticity of the diffusive term. Consequently, we had to adapt their approach to fit our assumptions.

In Section~\ref{sec: weak result}, we present concise proofs of weak existence and uniqueness in any dimension $N\geq 2$: first for the model~\eqref{eq:EDS-poisson} when the diffusive coefficients are positive and not state-dependent, then for the second model~\eqref{eq:EDS-poisson-2} under the uniform ellipticity condition. At the end of that section, we further address the non-negativity of the solutions of \eqref{eq:EDS-poisson} and \eqref{eq:EDS-poisson-2} when the diffusion coefficients vanish at zero. 

\bigskip
{\bf Notations}
\begin{itemize}
\item $\NRM{\cdot}$ denotes the Euclidean distance;
    \item For $x,y\in\dR$, $x\vee y=\max(x,y)$ and $x\wedge  y=\min(x,y)$.
\end{itemize}

\section{Strong well-posedness of a planar rank-based interacting system}\label{sec:main result}

In this section, we focus on the system~\eqref{eq:EDS-poisson-2} with $N=2$. Consequently, we consider two pairs of measurable functions, $(b_1,b_2)$ and $(\sigma_1,\sigma_2)$, defined on $\dR$ with values in $\dR^2$. The system~\eqref{eq:EDS-poisson-2} satisfied by $\bX=(X^1,X^2)$ can thus be written  
\begin{equation}\label{eq:EDSX}
\dd \bX_t=b(\bX_t)\dd t+\sigma(\bX_t)\dd W_t,
\end{equation}
with $W=(W^1,W^2)$ a 2-dimensional Brownian motion, and for $\bx= \begin{pmatrix} x_1\\ x_2 \end{pmatrix}$
\begin{equation}\label{eq:def-b}
b(\bx)=\begin{pmatrix}b^1(\bx)\\
b^2(\bx)
\end{pmatrix}:=
\begin{pmatrix}
b_1(x_1)\ind_{x_1\leq x_2}+b_2(x_1)\ind_{x_1>x_2}\\
b_1(x_2)\ind_{x_2<x_1}+b_2(x_2)\ind_{x_2\geq x_1}
\end{pmatrix},
\end{equation}
and
\[
\sigma(\bx):=\begin{pmatrix}
\sigma_1(x_1)&0\\
0&\sigma_2(x_2)
\end{pmatrix}.
\]

We assume that the coefficients $(b,\sigma)$ satisfy the following assumption:
\begin{quote}
{\bf Assumption $\PAR{A_{b,\sigma}}$}:\\
$b_1$, $b_2$, and $\sigma$ are locally Lipschitz-continuous functions, and
$\sigma$ is nonnegative on $\dR^2$.
\end{quote}

Let $\Theta=\BRA{\bx\in\dR^2:x_1=x_2}$. For $c>0$, we denote by $\Theta_c=\BRA{\bx\in \dR^2: \rd (\bx,\Theta)<c}$, where $\rd (\bx,\Theta)=\inf\BRA{\NRM{\bx-\by}:\by\in\Theta}$.

\medskip
Our aim is to prove the strong existence and uniqueness of the process $\bX$. Under assumption $(A_{b,\sigma})$, the coefficients of the SDE~\eqref{eq:EDSX} are \emph{locally Lipschitz-continuous} on the set $\dR^2\setminus\Theta$ and then satisfy the usual assumptions to ensure strong existence and uniqueness, so the only critical region is $\Theta$. By adapting the arguments of Leobacher and Szolgyenyi, \cite{Leobacher-Szolgyenyi2017}, we cleverly distort the process $\bX$ when it enters a neighborhood $\Theta_c$ of $\Theta$, for $c$ sufficiently small. 
To this end, we will introduce an invertible function $G:\dR^2\to\dR^2$ of class $\cC^1$ such that the new process $\bZ:=G(\bX)$ satisfies an SDE with locally Lipschitz-continuous coefficients on $\dR^2$. 

Let us mention that the process $\bX$ defined by \eqref{eq:EDSX} can be seen as a particular case of the article \cite{Leobacher-Szolgyenyi2017}. However, our goal is to relax some of their assumptions, in particular to extend their result to our model for non-uniformly elliptic diffusion coefficients.
In addition, the proof is simpler in the current model than in the general framework studied in \cite{Leobacher-Szolgyenyi2017}, and gives hope to extend the result to a higher dimension $N\geq 2$.

\medskip
We begin by providing some necessary definitions.

\begin{defi}\cite[Definition 3.1]{Leobacher-Szolgyenyi2017}
Let $A \subseteq \mathbb{R}^{d}$ with $d\geq 1$. The intrinsic metric $\rho$ on $A$ is given by
\[
\rho(x, y):=  \inf \BRA{\ell(\gamma)\text{ with } \gamma:[0,1] \rightarrow A \text { a continuous curve  satisfying } \gamma(0)=x, \gamma(1)=y}
\]

and $\rho(x, y):=\infty$ when there is no continuous curve from $x$ to $y$.\\

\end{defi}
\begin{defi} \cite[Definition 3.2]{Leobacher-Szolgyenyi2017} Let  $d,m\geq 1$, and $A \subseteq \mathbb{R}^{d}$. Let $f: A \longrightarrow \mathbb{R}^{m}$ be a function. We say that $f$ is intrinsic Lipschitz if it is Lipschitz w.r.t. the intrinsic metric on $A$, that is, if there exists a constant $L>0$ such that
\[
\forall x, y \in A: \quad\|f(x)-f(y)\| \leq L \rho(x, y).
\]
\end{defi}

\begin{defi}\cite[Definition 3.4]{Leobacher-Szolgyenyi2017} Let $I \subseteq \mathbb{R}$ be an interval. We say a function $f: I \longrightarrow \mathbb{R}$ is piecewise Lipschitz if there are finitely many points $\xi_{1}<\cdots<\xi_{m} \in I$ such that $f$ is Lipschitz on each of the intervals $\left(-\infty, \xi_{1}\right) \cap I,\left(\xi_{m}, \infty\right) \cap I$ and $\left(\xi_{k}, \xi_{k+1}\right)$, $k=1, \ldots, m$.
\end{defi}

\begin{rk} Note that for a function $f: \mathbb{R} \longrightarrow \mathbb{R}$, we have that $f$ is piecewise Lipschitz, if and only if $f$ is intrinsic Lipschitz on $\mathbb{R} \backslash B$, where $B$ is a finite subset of $\mathbb{R}$.
\end{rk}
\medskip

Under Assumption~$\PAR{A_{b,\sigma}}$, we easily notice that $b$ defined by \eqref{eq:def-b} is a piecewise Lipschitz function $\dR^2\to\dR^2$, with  exceptional set $\Theta$.

For $\bx\in\Theta$, we consider the orthonormal vector $\bn=\frac{1}{\sqrt{2}}\begin{pmatrix}
1\\
-1
\end{pmatrix}$ of $\Theta$ at $\bx$, and for $\bx\in\dR^2$ we denote by $p(\bx)$ the projection of $\bx$ onto $\Theta$. We have

\[p(\bx)=\begin{pmatrix}
\frac{x_1 + x_2}{2}\\
\frac{x_1 + x_2}{2}
\end{pmatrix}.\]

We introduce the following function of class $\cC^3$ with compact support on $\dR$: for $u\in\dR$,
\begin{equation}\label{eq: def phi}
\phi(u)=\PAR{1-u^2}^4\ind_{\ABS{u}\leq 1}.
\end{equation}

For a fixed $c>0$ (which will be chosen later), we introduce  the function $\widetilde \phi:\dR^2\to\dR$ by
\begin{equation}\label{eq:def phi tilde}
\widetilde{\phi}(\bx)= \phi \PAR{\frac{x_1 -x_2 }{\sqrt{2} c}} \alpha\PAR{\frac{x_1+x_2}{2}}
\end{equation}
with, for $u\in\dR$, 
\begin{equation}\label{eq:def-alpha}
\alpha (u)=\frac{1}{\sqrt{2}}\dfrac{b_1 (u )-b_2 (u )}{\sigma_1 (u)^2 + \sigma_2 (u)^2 }.
\end{equation}
We notice that $\frac{\ABS{x_1 -x_2} }{\sqrt{2} c}=\frac{\NRM{\bx-p(\bx)}}{c}=\frac{\rd(\bx,\Theta)}{c}$, hence $\widetilde{\phi}\equiv 0$ on $\dR^2\setminus\Theta_c$. 

We now introduce the function $G$:
\begin{align}\label{eq:def G}
G (\bx )
&=\bx+ (x_1 -x_2 )|x_1 -x_2 |\widetilde{\phi}(\bx) 
\bn \notag\\
&=\bx+\frac{1}{\sqrt{2}}(x_1 -x_2 )^2\sgn(x_1 -x_2) \widetilde{\phi}(\bx) \begin{pmatrix}1\\-1\end{pmatrix},
\end{align}
where $\sgn$ is the  sign function: $\sgn(x)=1$ for $x>0$, $\sgn(x)=-1$ for $x<0$, and $\sgn(x)$ for $x=0$.
We observe that $G(\bx)=\bx$ for $\bx\notin \Theta_c$ and  $G(\bx)=\bx$ for $\bx\in \Theta$.

\medskip
We now make some assumptions on the function $\alpha$.\\
{\bf Assumption $\PAR{A_\alpha}$}
\begin{enumerate}
\item   $\alpha$ is of class $\cC^2(\dR)$,
\item  $\alpha$ is bounded on $\dR$.
\end{enumerate}

Our main result for $N=2$ is the following.
\begin{thm}\label{thm:main result} We suppose that Assumptions $\PAR{A_{b,\sigma}}$ and $\PAR{A_\alpha}$ are satisfied.

Let $\bX_0$ be a random variable independent of $W$.
    There is a unique strong solution $\bX$ to the SDE \eqref{eq:EDSX} with initial condition $\bX_0$, up to a potential explosion time $e(X)\in(0,\infty]$.
\end{thm}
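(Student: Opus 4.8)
The plan is to use the Zvonkin-type transformation behind \cite{Leobacher-Szolgyenyi2017}: remove the drift discontinuity carried by $\Theta$ through the change of variables $\bZ:=G(\bX)$, turning \eqref{eq:EDSX} into an SDE with locally Lipschitz-continuous coefficients, for which strong well-posedness up to explosion is classical. Thus the proof splits into (i) $G$ is a global $\cC^1$-diffeomorphism of $\dR^2$; (ii) $\bZ=G(\bX)$ solves an SDE $\dd\bZ_t=\widetilde b(\bZ_t)\dd t+\widetilde\sigma(\bZ_t)\dd W_t$; (iii) $\widetilde b,\widetilde\sigma$ are locally Lipschitz; (iv) transfer of existence and uniqueness back through $G^{-1}$.

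For (i), I would first note that $G$ leaves $p(\bx)$ invariant, since $G^1(\bx)+G^2(\bx)=x_1+x_2$, so $G$ acts fiberwise along the lines directed by $\bn$; writing $\xi$ for the signed distance to $\Theta$ (so $x_1-x_2=\sqrt2\,\xi$), $G$ reduces on each such line to the one-dimensional map $\Phi(\xi)=\xi+2\xi|\xi|\,\phi(\xi/c)\,\alpha(s)$, which equals the identity for $|\xi|\geq c$. A direct computation, using $G^2=x_2-(G^1-x_1)$, gives $\det DG(\bx)=1+2\sqrt2\,|x_1-x_2|\,\phi\,\alpha+\tfrac1c (x_1-x_2)|x_1-x_2|\,\phi'\,\alpha$, where the derivatives of $\alpha$ cancel. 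Since $\alpha$ is bounded by Assumption $\PAR{A_\alpha}$ and the support of $\phi$ confines $|x_1-x_2|\leq\sqrt2\,c$, the determinant is bounded away from $0$ uniformly once $c$ is small; each fiber map $\Phi$ is then strictly increasing, hence a bijection of $\dR$, so $G$ is a proper local $\cC^1$-diffeomorphism and therefore a global one, equal to the identity off $\Theta_c$ and on $\Theta$, with $G-\mathrm{id}$ bounded.

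For (ii), I would apply a generalized Itô formula to $\bZ=G(\bX)$. The map $G$ is $\cC^1$ but only piecewise $\cC^2$, since its second derivatives contain $h''(x_1-x_2)$ with $h(r)=r|r|$, which jumps across $\Theta$. However $h'(r)=2|r|$ is continuous, so applying the Itô--Tanaka formula in the variable $R_t:=X^1_t-X^2_t$ produces no local-time contribution at $0$ (the second-derivative measure of $h$ has no atom), and the usual Itô term with $h''=2\,\sgn$ defined a.e. is valid for the continuous semimartingale $\bX$ even when $\sigma$ degenerates. This yields $\widetilde\sigma=(DG\,\sigma)\circ G^{-1}$ and $\widetilde b=\big(DG\,b+\tfrac12[\partial_{11}G\,\sigma_1^2+\partial_{22}G\,\sigma_2^2]\big)\circ G^{-1}$.

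Step (iii) is the crux and the main obstacle. Away from $\Theta$ all coefficients are locally Lipschitz, so it suffices to check continuity on $\Theta$. Because $h(0)=h'(0)=0$, one has $DG=I$ on $\Theta$, so the first-order part $DG\,b$ inherits exactly the jump of $b$: its first component passes from $b_1(s)$ as $r\to0^-$ to $b_2(s)$ as $r\to0^+$. The second-order part $\tfrac12[\partial_{11}G^1\sigma_1^2+\partial_{22}G^1\sigma_2^2]$ is governed by the jump of $h''=2\,\sgn$ and the value $\widetilde\phi|_\Theta=\alpha(s)$, giving one-sided limits $\pm\tfrac1{\sqrt2}\,\alpha\,(\sigma_1^2+\sigma_2^2)(s)=\pm\tfrac12(b_1-b_2)(s)$ precisely by the defining identity \eqref{eq:def-alpha}. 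Adding the two, both one-sided limits equal $\tfrac12(b_1+b_2)(s)$: the jumps cancel and $\widetilde b^1$, and symmetrically $\widetilde b^2$, is continuous across $\Theta$; being Lipschitz on either side and continuous across the line $\Theta$, it is locally Lipschitz. Likewise $\widetilde\sigma$ is locally Lipschitz since $DG$ is locally Lipschitz (the second derivatives of $G$ being locally bounded, using $\alpha\in\cC^2$) and $\sigma$ is locally Lipschitz. Finally, for (iv), the transformed SDE with initial datum $G(\bX_0)$ has a unique strong solution $\bZ$ up to an explosion time; since $G$ is a global diffeomorphism with $G-\mathrm{id}$ bounded, $\bX:=G^{-1}(\bZ)$ is the unique strong solution of \eqref{eq:EDSX} with the same explosion time $e(\bX)=e(\bZ)$. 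The delicate part throughout is the rigorous justification of the generalized Itô formula for the merely $\cC^1$ map $G$ together with the bookkeeping of this cancellation, rather than the diffeomorphism or the classical well-posedness steps.
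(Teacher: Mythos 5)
Your proposal is correct and follows essentially the same route as the paper: the transformation $\bZ=G(\bX)$, the Jacobian determinant computation plus a global-inverse argument (the paper invokes Hadamard's theorem where you use properness), the cancellation of the drift jump against the second-order term via the defining identity \eqref{eq:def-alpha}, and the transfer back through $G^{-1}$. The only cosmetic difference is in justifying It\^o's formula for the merely $\cC^1$ map $G$: you argue via It\^o--Tanaka for $h(r)=r|r|$ in the variable $X^1_t-X^2_t$, while the paper rotates coordinates with $\tau$, $\cS$ and applies \cite[Theorem 2.9]{LTS15} to $G\circ\tau$, which has bounded second derivatives off the hyperplane $\{y_1=0\}$.
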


Note that, when $b_1,b_2,\sigma$ are Lipschitz-continuous with $b_1,b_2$ bounded, there is no explosion of the solution, i.e. $e(X)=\infty$ a.s. (see the comment at the beginning of the Step 3 in the proof of Theorem~\ref{thm:SDE-Z}).

The proof of Theorem~\ref{thm:main result} is given in Section~\ref{sec:proof main thm}.
Before diving into the theorem's proof, let us make a few comments on Assumption $\PAR{A_\alpha}$ and on the construction of the function $G$, and give numerical simulations for some examples.
\begin{rk}\label{rk:comment on the planar well posedness}
\begin{enumerate}
\item
The first assumption of $\PAR{A_\alpha}$ will be useful to prove that $G$ is of class $\cC^1$ on $\dR^2$. 
\item The second assumption of $\PAR{A_\alpha}$ will allow us to prove that $G$ is invertible. We note that when $b_1$ and $b_2$ are bounded, and $\sigma_1$, $\sigma_2$ are bounded below by a strictly positive constant, then $\alpha$ is bounded. This is a sufficient, but not necessary, condition for $\alpha$ to be bounded. Indeed, under good assumptions on the functions, we can assume that $\sigma_1$ and $\sigma_2$ are only nonnegative.  When the roots of $\sigma_1$ and $\sigma_2$ are distinct, then the function $\sigma_1^2+\sigma_2^2$ is always positive. When $\sigma_1$ and $\sigma_2$ have a common root, we then need to assume that $b_1$ and $b_2$ are equal at this point, and such that $\alpha$ is well defined and bounded. We can think for example of Logistic-type models, see Example~\ref{exple:logistic model} below. 
\item The main difference with the work of \cite{Leobacher-Szolgyenyi2017} is in the choice of the function $\alpha$, which in our case allows us to consider non uniformly elliptic diffusion coefficients, as mentioned above.
\item The construction of the function $G$ cannot be directly  extended to higher dimensions. In dimension $N\geq3$, the set $\Theta=\BRA{\bx\in\dR^N:\exists i\neq j 
\text{ with }x_i=x_j}$ is the union of hyperplanes.
The orthogonal projection of a point $\bx\in\dR^N$ on $\Theta$ and the orthogonal direction $\bn$ to $\Theta$ are not always unique, and thus well defined. Therefore, the way in which the function $G$ is constructed cannot guarantee its regularity.
\end{enumerate}
\end{rk}

Let us note that Leobacher and Sz\"olgyenyi \cite{Leobacher-Szolgyenyi2017} proved the convergence of the following Euler-Maruyama scheme to simulate the process $\bX$ starting from an initial value $\bx_0$: let $\delta>0$ be fixed, and 
\begin{itemize}
    \item Let $\bz_{0} =G(\bx_0)$;
    \item Apply the classical Euler-Maruyama scheme to SDE (\ref{eq:Z}) to obtain an approximation $\bZ^{(\delta)} $ of $\bZ$ on the time interval $[0,T]$ with step size $\delta$;
    \item Set $\bX^{(\delta)} =G^{-1} (\bZ^{(\delta)} )$ as an approximation of $\bX$ on $[0,T]$.
\end{itemize}
Their proof works with our model under Assumptions $(A_{b,\sigma})$ and $(A_\alpha)$ when the functions $b_k$ are bounded.
However in the following example, we used a classical Euler-Maruyama scheme, since the inverse of the function $G$ is not easy to compute.

\begin{exple}\label{exple:logistic model}
We simulate trajectories for two logistic-type models that satisfy the assumptions of Theorem~\ref{thm:main result} when $N=2$. 
\begin{enumerate}
    \item Logistic-type model I: we consider the coefficients $
 b_k(x)=r_kx^2\PAR{1-\frac{x}{x_\mathrm{max}}}_+$ and $  \sigma(x)=\sigma_0 x$, where $u_+:=\max(u,0)$.
 We easily observe that the coefficients satisfy the assumptions of Theorem~\ref{thm:main result}.
In Figure~\ref{fig:logistic model 1}, we observe the trajectories of the interacting processes when $r_k=0.7+0.1\frac{k}{N}$, $x_{\max}=10$ and $\sigma_0=1/20$.
 \begin{figure}[h!]
     \centering
     \includegraphics[scale=.5]{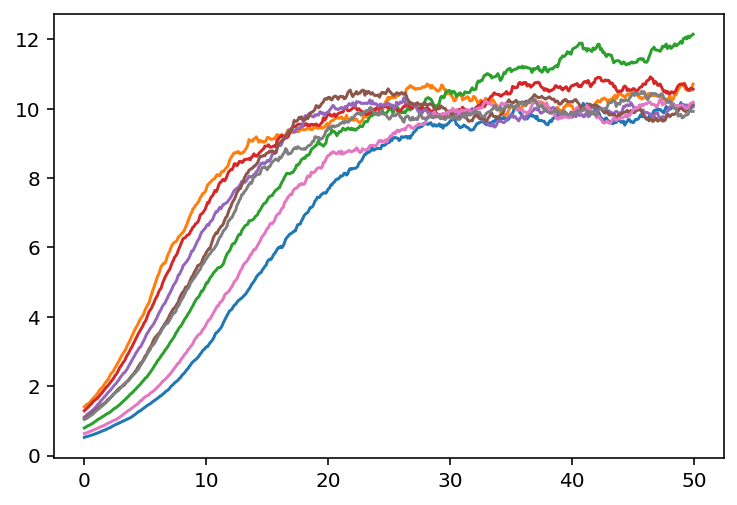}
     \caption{Trajectories of the Logistic-type model I  until time $T=50$ for $N=8$.}
     \label{fig:logistic model 1}
 \end{figure}
 
\item Logistic-type model II: we now consider the coefficients  $b_k(x)=r_kx^2\PAR{1-\frac{x}{x_\mathrm{max}}}^2$, and $\sigma(x)=\sigma_0 x\PAR{1-\frac{x}{x_\mathrm{max}}}$ on $[0,x_{\max}]$. In this second logistic-type model, the fluctuations are canceled out  when the process reaches the maximal value $x_{\max}$ as we can see in Figure~\ref{fig:logistic model 2} with $r_k=0.7+0.1\frac{k}{N}$, $x_{\max}=10$, $\sigma_0=1/8$. This model is more realistic and the shape of the curves fits better with the observations of Mahé and Bugeon \cite{bugeon-mahe18}.

\begin{figure}[h!]
    \centering
    \includegraphics[scale=.5]{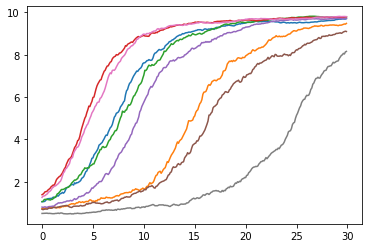}
    \caption{Trajectories of the Logistic-type model II  until time $T=30$ for $N=8$.}
    \label{fig:logistic model 2}
\end{figure}

\end{enumerate}
\end{exple}

The proof of Theorem~\ref{thm:main result}, given in Section~\ref{sec:proof main thm}, requires different intermediate results: first on the properties of the function $G$ and then on the well-posedness of the SDE satisfied by $\bZ=G(\bX)$. 
\subsection{Properties of the function $G$}

 We first prove that $G$ has the required regularity for $c$ small enough.
\begin{prop}\label{prop:G-diffeo}
Under Assumption $\PAR{A_{\alpha}}$, there exists $c_0>0$ such that $\forall c\in(0,c_0)$,  
the function $G$, defined by~\eqref{eq:def G}, is a $\cC^1$-diffeomorphism on $\dR^2$.
\end{prop}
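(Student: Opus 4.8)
The plan is to make a linear change of coordinates so that $G$ becomes triangular, reducing the problem to a one-parameter family of one-dimensional increasing bijections. Introduce the orthonormal basis $\bn=\frac{1}{\sqrt2}(1,-1)^\top$ (normal to $\Theta$) and $\mathbf{e}=\frac{1}{\sqrt2}(1,1)^\top$ (tangent to $\Theta$), together with the orthogonal change of variables $\Phi(\bx)=(s,m)$, where $s=\langle\bx,\bn\rangle=\frac{x_1-x_2}{\sqrt2}$ is the signed distance to $\Theta$ and $m=\langle\bx,\mathbf{e}\rangle=\frac{x_1+x_2}{\sqrt2}$. Since the displacement in \eqref{eq:def G} points along $\bn$, and since $\widetilde\phi$ depends on $\bx$ only through $s$ (via $\phi(s/c)$) and through $\frac{x_1+x_2}{2}=\frac{m}{\sqrt2}$, the conjugated map $\widetilde G:=\Phi\circ G\circ\Phi^{-1}$ has the triangular form $\widetilde G(s,m)=(S(s,m),m)$ with
\[
S(s,m)=s+2\,s|s|\,\phi\PAR{\tfrac{s}{c}}\,\alpha\PAR{\tfrac{m}{\sqrt2}}.
\]
As $\Phi$ is a linear isometry, $G$ is a $\cC^1$-diffeomorphism of $\dR^2$ if and only if $\widetilde G$ is.

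First I would check $\widetilde G\in\cC^1$. The only delicacy is the factor $s|s|$, which is $\cC^1$ on $\dR$ with derivative $2|s|$; combined with $\phi\in\cC^3$ and, by the first part of $\PAR{A_\alpha}$, $\alpha\in\cC^2$, this makes $S$ continuously differentiable, with
\[
\partial_s S = 1+\alpha\PAR{\tfrac{m}{\sqrt2}}\SBRA{4|s|\,\phi\PAR{\tfrac{s}{c}}+\tfrac{2}{c}\,s|s|\,\phi'\PAR{\tfrac{s}{c}}},\qquad \partial_m S = \sqrt2\,s|s|\,\phi\PAR{\tfrac{s}{c}}\,\alpha'\PAR{\tfrac{m}{\sqrt2}},
\]
both continuous on $\dR^2$.

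Next comes the main quantitative step: showing $\det D\widetilde G=\partial_s S>0$ everywhere for $c$ small enough. Since $\phi$ is supported in $[-1,1]$, the bracket vanishes unless $|s|\le c$; on that region $|s|\le c$ and $|s|s||=s^2\le c^2$, while $\phi,\phi'$ are bounded and, by the second part of $\PAR{A_\alpha}$, $\alpha$ is bounded, say $M:=\sup\ABS{\alpha}$. Hence the bracketed term is $O(c)$ uniformly, and choosing $c_0>0$ so that $M\,c_0\PAR{4\sup\ABS{\phi}+2\sup\ABS{\phi'}}<1$ yields $\partial_s S\ge 1-Mc\PAR{4\sup\ABS{\phi}+2\sup\ABS{\phi'}}>0$ for every $c\in(0,c_0)$ (the case $M=0$ being trivial since then $G=\mathrm{id}$). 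This is the real heart of the argument: it is precisely where the boundedness of $\alpha$ enters, and it pins down the admissible range of $c$.

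Finally I would upgrade the local statement to a global diffeomorphism. For each fixed $m$, the map $s\mapsto S(s,m)$ is $\cC^1$ with strictly positive derivative, hence strictly increasing, and it coincides with $s\mapsto s$ for $|s|\ge c$; being a continuous, strictly increasing map with the correct limits at $\pm\infty$, it is a bijection of $\dR$ onto $\dR$. Since the second coordinate is left unchanged, $\widetilde G$ is a bijection of $\dR^2$. As $\widetilde G$ is a $\cC^1$ bijection whose Jacobian is everywhere invertible, the global inverse function theorem gives $\widetilde G^{-1}\in\cC^1$, so $\widetilde G$, and therefore $G=\Phi^{-1}\circ\widetilde G\circ\Phi$, is a $\cC^1$-diffeomorphism of $\dR^2$.
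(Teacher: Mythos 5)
Your proof is correct. The quantitative core — the positivity of the Jacobian determinant for $c$ small, obtained from the boundedness of $\alpha$, the compact support of $\phi$, and the factor $s|s|=O(c^2)$ on that support — is the same estimate as in the paper, merely written in the rotated coordinates $(s,m)=\cS(\bx)$ that the paper itself introduces only later (in the proof of Theorem~\ref{thm:SDE-Z}, via $\tau$ and $\cS$); since $\det D\widetilde G=\partial_s S$ equals $\det G'$ computed in the original frame, this part is essentially a change of presentation. Where you genuinely diverge is the globalization step: the paper checks properness ($\NRM{G(\bx)}\to\infty$ as $\NRM{\bx}\to\infty$, because $G=\mathrm{id}$ outside $\Theta_c$) and invokes Hadamard's global inverse theorem, whereas you exploit the triangular structure $\widetilde G(s,m)=(S(s,m),m)$ to prove bijectivity directly: for each $m$ the slice $s\mapsto S(s,m)$ is strictly increasing and agrees with the identity for $\ABS{s}\geq c$, hence is a bijection of $\dR$. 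Your route is more elementary and self-contained (no external global inversion theorem, only the local inverse function theorem to get $\widetilde G^{-1}\in\cC^1$), at the cost of relying on the specific one-dimensional normal displacement along $\bn$ — which is also precisely the feature that, as the paper notes in Remark~\ref{rk:comment on the planar well posedness}, breaks down in dimension $N\geq 3$. Both arguments are valid; your constant $c_0$ differs from the paper's $\PAR{16\NRM{\alpha}_\infty}^{-1}$ only because you bound $\sup\ABS{\phi}$ and $\sup\ABS{\phi'}$ generically rather than using the sign of $z\phi'(z)$.
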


 \begin{proof}
The function $\phi$ is of class $\cC^3$ on $\dR$ and by assumption, $\alpha$ is of class $\cC^2$ on $\dR$. Thus the function $\widetilde{\phi}$, defined by~\eqref{eq:def phi tilde}, is of class $\cC^2$ on $\dR^2$. We also easily observe that $G$ is continuous on $\dR^2$.

Let us compute the first derivatives of $G$. We have for $x\in\dR^2\setminus\Theta$,
\begin{align}
\frac{\partial G}{\partial x_1}(\bx)&=\begin{pmatrix}1\\0\end{pmatrix}+\frac{2}{\sqrt{2}}(x_1 -x_2 )\sgn(x_1 -x_2 )\widetilde{\phi}(\bx) \begin{pmatrix}1\\-1\end{pmatrix}
\nonumber\\
&\hskip 1cm +\frac{1}{\sqrt{2}}(x_1 -x_2 )^2\sgn(x_1 -x_2)\partial_{x_1}\widetilde{\phi}(\bx) \begin{pmatrix}1\\-1\end{pmatrix}
\label{eq:G'1}\\
\frac{\partial G}{\partial x_2}(\bx)&=\begin{pmatrix}0\\1\end{pmatrix}-\frac{2}{\sqrt{2}}(x_1 -x_2 )\sgn(x_1 -x_2 )\widetilde{\phi}(\bx) \begin{pmatrix}1\\-1\end{pmatrix}
\nonumber\\
&\hskip 1cm +\frac{1}{\sqrt{2}}(x_1 -x_2 )^2\sgn(x_1 -x_2)\partial_{x_2}\widetilde{\phi}(\bx) \begin{pmatrix}1\\-1\end{pmatrix}
\label{eq:G'2}
\end{align}
The first derivatives of $G$ being continuous on $\dR^2$, we deduce that $G$ is of class $\cC^1$ on $\dR^2$.

We now prove that $G$ is invertible.
The determinant of the Jacobian matrix $G':=\PAR{\frac{\partial G}{\partial x_1},\frac{\partial G}{\partial x_2}}$ of $G$ is given by
\begin{align*}
\det(G'(\bx))&=1+\frac{4}{\sqrt{2}}\ABS{x_1-x_2}\widetilde\phi(\bx)+\frac{1}{\sqrt{2}}(x_1-x_2)\ABS{x_1-x_2}\PAR{\partial_{x_1}\widetilde{\phi}(\bx)-\partial_{x_2}\widetilde{\phi}(\bx)}\\
&=1+\frac{4}{\sqrt{2}}\ABS{x_1-x_2}\phi\PAR{\frac{x_1-x_2}{\sqrt{2}c}}\alpha\PAR{\frac{x_1+x_2}{2}}\\
&\hskip 3cm+\frac{1}{c}(x_1-x_2)\ABS{x_1-x_2}\phi'\PAR{\frac{x_1-x_2}{\sqrt{2}c}}\alpha\PAR{\frac{x_1+x_2}{2}}.
\end{align*}

We notice that $z\phi'(z)=-8z^2(1-z^2)^3\ind_{\ABS{z}\leq 1}\leq 0$. By assumption, the function $\alpha$ is bounded on $\dR$, and since the support of $\phi$ is $[-1,1]$, for $c$ small enough, i.e. $c<\PAR{16\NRM{\alpha}_\infty}^{-1}$, $\det(G'(\bx))>0$ for all $\bx\in\dR^2$.

Moreover $G$ is of class $\mathcal{C}^1$ and $\lim_{||\bx||\rightarrow\infty } ||G(\bx)||=\infty$ (as $\phi$ is equal to zero outside a compact set and $\alpha$ is bounded), so we can apply Hadamard's global inverse theorem, see Theorem 2.2 in \cite{MR3319979}, to conclude that $G$ is a $\cC^1$-diffeomorphism on $\dR^2$.

\end{proof}

 \subsection{The stochastic differential equation satisfied by $G(\bX)$}

 In this section, we focus on the new process $\bZ=G(\bX)$.
 \begin{thm}\label{thm:SDE-Z}
Suppose that Assumptions $\PAR{A_{b,\sigma}}$ and $\PAR{A_\alpha}$ are satisfied.
 
For $c\in(0,c_0)$, with $c_0$ given in Theorem~\ref{prop:G-diffeo}, the process $\bZ:=G(\bX)$ is the solution of an SDE with locally Lipschitz continuous coefficients on $\dR^2$.
 \end{thm}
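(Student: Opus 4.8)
The plan is to apply Itô's formula to the $\cC^1$-diffeomorphism $G$ and show that the resulting drift and diffusion coefficients for $\bZ=G(\bX)$, when re-expressed through $G^{-1}$, are locally Lipschitz on all of $\dR^2$ --- in particular that the discontinuity of $b$ across $\Theta$ is exactly cancelled by the correction term built into $G$. The whole construction of $\widetilde\phi$, and especially of $\alpha$ in~\eqref{eq:def-alpha}, is designed so that the jump in the drift of $\bZ$ at $\Theta$ vanishes. Because $G$ is only $\cC^1$ (not $\cC^2$), the first issue is that the classical Itô formula does not apply directly; I would instead appeal to the Itô--Tanaka / Itô--Krylov formula for $\cC^1$ functions whose second derivatives exist and are locally bounded away from $\Theta$ (the second derivatives of $G$ exist on $\dR^2\setminus\Theta$ since $\widetilde\phi$ is $\cC^2$ there), checking that the local-time contribution on $\Theta$ vanishes. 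Here it is convenient that $\sigma$ is diagonal and that the correction is carried by the single direction $\bn=\frac{1}{\sqrt2}(1,-1)^\top$: the relevant scalar process is $Y_t:=X^1_t-X^2_t$, whose diffusion coefficient $\sqrt{\sigma_1(x_1)^2+\sigma_2(x_2)^2}$ may vanish on $\Theta$ precisely where $\sigma_1,\sigma_2$ share a root, so occupation-time arguments must be handled with care.

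Concretely, first I would write $\bZ_t=G(\bX_t)$ and expand, obtaining
\[
\dd\bZ_t=\Big(G'(\bX_t)\,b(\bX_t)+\tfrac12\,\mathcal{L}^{\sigma}G(\bX_t)\Big)\dd t+G'(\bX_t)\,\sigma(\bX_t)\,\dd W_t,
\]
valid on $\dR^2\setminus\Theta$, where $\mathcal{L}^\sigma$ denotes the second-order part $\sum_{i}\sigma_i(x_i)^2\partial_{x_i}^2$ acting componentwise on $G$. The diffusion coefficient is then $\bx\mapsto G'(G^{-1}(\cdot))\,\sigma(G^{-1}(\cdot))$, which is continuous since $G'$ is continuous, $\sigma$ is locally Lipschitz, and $G^{-1}$ is $\cC^1$ by Proposition~\ref{prop:G-diffeo}; away from $\Theta$ it is a composition of locally Lipschitz maps, and on $\Theta$ one checks that $G'=\mathrm{Id}$ (since $\widetilde\phi$ and its relevant first derivatives vanish there appropriately), so there is no jump. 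The crux is the drift: along $\bn$, the term $G'(\bx)b(\bx)$ contributes the discontinuous part $\tfrac{1}{\sqrt2}(b_1(x_1)-b_2(x_1))\sgn(x_1-x_2)$ near $\Theta$, while the Itô second-order term acting on $(x_1-x_2)^2\sgn(x_1-x_2)\widetilde\phi$ produces, at leading order as $\bx\to\Theta$, a term proportional to $(\sigma_1^2+\sigma_2^2)\,\widetilde\phi$. I would compute the one-sided limits of the full drift as $x_1-x_2\to0^+$ and $x_1-x_2\to0^-$ and verify they coincide, the matching being forced exactly by the definition $\alpha=\frac{1}{\sqrt2}\frac{b_1-b_2}{\sigma_1^2+\sigma_2^2}$.

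The main obstacle I anticipate is twofold. First, justifying the change of variables rigorously with only $\cC^1$ regularity of $G$: I would argue that $G\in\cC^2(\dR^2\setminus\Theta)$ with $\bX$ spending zero Lebesgue time on $\Theta$ (or that the local time of $Y_t=X^1_t-X^2_t$ at $0$ contributes nothing because the coefficient of the would-be singular term is continuous and the jump in $G'$ across $\Theta$ is zero), so that the Itô--Tanaka formula yields an SDE with no extra drift. Second, and more delicately, establishing that the new drift $\bx\mapsto\big(G'b+\tfrac12\mathcal{L}^\sigma G\big)(G^{-1}(\bx))$ is not merely continuous but \emph{locally Lipschitz} across $\Theta$: the discontinuous indicator-based part of $b$ gets multiplied by factors of $(x_1-x_2)$ and $\widetilde\phi$ that vanish linearly at $\Theta$, so the product is Lipschitz, while the second-order term is a product of the locally Lipschitz $\sigma_i^2$ with the $\cC^1$ (hence locally Lipschitz) derivatives of $G$; the cancellation encoded by $\alpha$ must be checked to be a genuine Lipschitz cancellation, not only a pointwise one. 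Assumption $(A_\alpha)$ ($\alpha\in\cC^2$, bounded) is what guarantees the requisite smoothness of $\widetilde\phi$ and the boundedness needed to keep these products locally Lipschitz. Once local Lipschitz continuity of both coefficients on $\dR^2$ is established, standard SDE theory gives strong existence and pathwise uniqueness for $\bZ$, which transfers back to $\bX=G^{-1}(\bZ)$ and yields Theorem~\ref{thm:main result}.
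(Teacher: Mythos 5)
Your overall strategy coincides with the paper's: distort by $G$, apply a generalized It\^o formula, verify that the one-sided limits of the drift across $\Theta$ match thanks to the definition of $\alpha$, and then upgrade the coefficients to locally Lipschitz. Two points deserve comment. On the It\^o step, the paper does not use an It\^o--Tanaka / local-time argument: it straightens $\Theta$ by the linear change of coordinates $\tau$, $\cS$, checks that $G\circ\tau$ is $\cC^1$ with second derivatives existing and \emph{bounded} off the hyperplane $\{y_1=0\}$ (this is where boundedness of $\alpha$ enters), and invokes the generalized It\^o formula of \cite[Theorem 2.9]{LTS15}. That route needs no non-degeneracy of the noise in the normal direction, which sidesteps exactly the difficulty you flag: when $\sigma_1$ and $\sigma_2$ share a root on $\Theta$, the scalar process $X^1_t-X^2_t$ has degenerate diffusion there and occupation-time arguments become delicate. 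Your route is probably salvageable (the coefficient of the local-time term is the jump of the normal derivative of $G$ across $\Theta$, which vanishes since $G$ is $\cC^1$), but making it rigorous for a degenerate diffusion essentially brings you back to the same kind of statement as \cite[Theorem 2.9]{LTS15}.

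The genuine gap is in your final step. You assert that ``the discontinuous indicator-based part of $b$ gets multiplied by factors of $(x_1-x_2)$ and $\widetilde\phi$ that vanish linearly at $\Theta$, so the product is Lipschitz.'' This is not correct: in $G'(\bx)b(\bx)$ the matrix $G'$ equals the identity plus corrections vanishing at $\Theta$, so the discontinuous $b$ enters undamped; its jump is cancelled only against the jump of the second-order It\^o term, and that cancellation makes the new drift $\tilde b$ \emph{continuous} at $\Theta$ but does not by itself give a Lipschitz estimate across $\Theta$. The paper closes this with a gluing argument: $\tilde b$ is shown to be intrinsic Lipschitz separately on $\dR^2\setminus\Theta_c$ (where $\tilde b=b$) and on $\Theta_c\setminus\Theta$ (using that $G'$ and $G''$ are bounded with bounded derivatives there, \cite[Lemmas 3.8 and 3.9]{Leobacher-Szolgyenyi2017}), and then the combination ``piecewise intrinsic Lipschitz $+$ continuous on $\dR^2$'' is upgraded to locally Lipschitz on all of $\dR^2$ via \cite[Lemmas 3.6 and 3.11]{Leobacher-Szolgyenyi2017}. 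You do flag this issue yourself (``must be checked to be a genuine Lipschitz cancellation, not only a pointwise one''), but you do not supply the argument; without it the proof is incomplete.
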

 \begin{proof}
The proof is divided into three steps. Following the ideas of \cite{Leobacher-Szolgyenyi2017}, we first prove that Itô's formula holds for $G(\bX)$ and $G^{-1}(\bX)$, then we prove that the drift coefficient of the SDE of $\bZ$ is continuous on $\dR^2$, and finally we deduce that both coefficients are locally Lipschitz continuous on $\dR^2$.

\begin{quote}
\underline{Step 1}: Itô's formula holds for $G$ and $G^{-1}$.
\end{quote}

$G$ and $G^{-1}$ are
of class $\cC^2$ on $\dR^2\setminus\Theta$. Then if $\bx\in\dR^2\setminus\Theta$, Itô's formula holds until $\bX$ hits $\Theta$.
For $\bx\in\Theta$, 
we first remark that the set $\Theta$ can be parametrized by the function $\psi(u)=\begin{pmatrix}u\\u\end{pmatrix}$ for $u\in\dR$. For $\by\in\dR^2$, we define
\begin{equation}\label{eq:tau}
\tau (\by)=y_1 \bn +\psi (y_2 )=\begin{pmatrix}
\frac{y_1}{\sqrt{2}}+y_2\\
-\frac{y_1}{\sqrt{2}}+y_2
\end{pmatrix}
\end{equation}
and its inverse $\cS:=\tau^{-1}$,
\begin{equation}\label{eq:S}
\cS (\bx)=\begin{pmatrix}
\frac{x_1 - x_2}{\sqrt{2}}\\
\frac{x_1 + x_2}{2}
\end{pmatrix}.
\end{equation}

We recall that 
$\Theta_c=\BRA{\bx\in\dR^2:\mathrm{d}(\bx,\Theta)<c}$, then we remark that
\[
\Theta_c=\BRA{\tau(\by):\ABS{y_1}< c,y_2\in\dR}.
\]

The functions $\tau$ and $\cS$ are linear on $\dR^2$, then Itô's formula holds for those functions. We now prove that it also holds for $G\circ \tau$, and consequently it will hold for $G=G\circ \tau\circ \cS$.

We first note that
\begin{align}
\nonumber
G\circ \tau (\by)
&=\begin{pmatrix}
\frac{y_1}{\sqrt{2}}+y_2\\
-\frac{y_1}{\sqrt{2}}+y_2
\end{pmatrix}+ \frac{2}{\sqrt{2}}y_1^2\sgn(y_1)\widetilde{\phi}( \tau (\by)) \begin{pmatrix}
1\\
-1
\end{pmatrix}
\\
&=\begin{pmatrix}
\frac{y_1}{\sqrt{2}}+y_2\\
-\frac{y_1}{\sqrt{2}}+y_2
\end{pmatrix}+ \frac{2}{\sqrt{2}}y_1^2\sgn(y_1) \phi\PAR{\frac{y_1}{c}}\alpha ( y_2)\begin{pmatrix}
1\\
-1
\end{pmatrix}.
\label{eq:Gtau}
\end{align}

We observe that $G\circ \tau$ is a function of class $\cC^1$ on $\dR^2$, with second derivatives on $\dR^*\times\dR$. We also note that $\frac{\partial^2 G\circ \tau}{\partial y_{2}^2}$ and $\frac{\partial^2 G\circ \tau}{\partial y_{1} \partial y_{2} }$ are continuous on $\dR^2$, and $\frac{\partial^2 G\circ \tau}{\partial y_{1}^2}$ is continuous on $\dR^*\times \dR$ with $\sup_{\{\by|y_1 \not=0\}} |\frac{\partial^2 G\circ \tau}{\partial y_{1}^2}(\by)|<\infty$, because $\alpha$ is bounded on $\dR$. 
By \cite[Theorem 2.9]{LTS15},  we deduce that Itô's formula holds for $G\circ \tau$, and then for $G$.

Introducing 
the function $\cS\circ G^{-1}\circ \tau=\PAR{\cS\circ G\circ \tau}^{-1}$, we obtain similarly that Itô's formula also holds for $G^{-1}$.

\begin{quote}
\underline{Step 2}: $\bZ=G(\bX)$ satisfies an SDE with a continuous drift coefficient.
\end{quote}

Recall that $\bX$ satisfies \eqref{eq:EDSX} and $\cS$ is the linear function defined by \eqref{eq:S}, we deduce that $\bY:=\cS(\bX)$ satisfies
\begin{align}
\dd Y^{1}_t &=\frac{1}{\sqrt{2}}\dd X^{1}_t -\frac{1}{\sqrt{2}}\dd X^{2}_t \nonumber\\
&=\frac{1}{\sqrt{2}}\PAR{b^1(\tau(\bY))-b^2(\tau(\bY))}\dd t+\frac{1}{\sqrt{2}}\sigma_1(\tau(\bY))\dd W^1_t-\frac{1}{\sqrt{2}}\sigma_2(\tau(\bY))\dd W^2_t  
\label{eqY1}\\
\dd Y^{2}_t &=\frac{1}{2}\dd X^{1}_t +\frac{1}{2}\dd X^{2}_t \nonumber \\
&=\frac{1}{2}\PAR{b^1(\tau(\bY))+b^2(\tau(\bY))}\dd t+\frac{1}{2}\sigma_1(\tau(\bY))\dd W^1_t+\frac{1}{2}\sigma_2(\tau(\bY))\dd W^2_t\label{eqY2}
\end{align}
with
\[
b(\tau(\by))=\begin{pmatrix}
    b_1\PAR{\frac{y_1}{\sqrt{2}}+y_2}\ind_{y_1<0}+b_2\PAR{\frac{y_1}{\sqrt{2}}+y_2}\ind_{y_1>0}\\
    b_1\PAR{-\frac{y_1}{\sqrt{2}}+y_2}\ind_{y_1>0}+b_2\PAR{-\frac{y_1}{\sqrt{2}}+y_2}\ind_{y_1<0}
\end{pmatrix}.
\]

\vskip 0.2cm
We consider the function $g:= \mathcal{S}\circ G\circ \tau$. As $\tau\circ\cS=id$, we remark that $G(\bX)=\tau\circ\cS\circ G\circ \tau \circ\cS(\bX)=\tau\circ g(\bY)$.
We just need to prove that the drift coefficient of $g(\bY)$ is continuous. As $\tau$ is a linear function, we will then easily deduce  the continuity of the drift coefficient of the SDE for the process $\bZ=G(\bX)$.

\vskip 0.2cm
We have, from \eqref{eq:Gtau},
\begin{align*}
g(\by)&=\begin{pmatrix}
y_1\\
y_2
\end{pmatrix}+2 y_1^2\sgn(y_1) \phi\PAR{\frac{y_1}{c}}\alpha ( y_2)\begin{pmatrix}
1\\
0
\end{pmatrix}.
\end{align*}

Let us compute the successive derivatives of $g$. We have
\begin{align*}
\frac{\partial g}{\partial y_1}
(\by)&=\begin{pmatrix}
1+4y_1\sgn(y_1) \phi(\frac{y_1}{c})\alpha( y_2)+\frac{2}{c}y_1^2\sgn(y_1)\phi'\PAR{\frac{y_1}{c}}\alpha(y_2)\\
0
\end{pmatrix}
\\[0.2cm]
\frac{\partial g}{\partial y_2}
(\by)&=\begin{pmatrix}
2y_1^2\sgn(y_1) \phi(\frac{y_1}{c})\alpha' ( y_2)\\
1
\end{pmatrix}
\\[0.2cm]
\frac{\partial^2 g}{\partial y_{1}^2}
(\by)&=
\begin{pmatrix}
\SBRA{4\sgn(y_1) \phi(\frac{y_1}{c})+\frac{8}{c}y_1\sgn(y_1) \phi'(\frac{y_1}{c})+\frac{2}{c^2}y_1^2\sgn(y_1)\phi"\PAR{\frac{y_1}{c}}}\alpha(y_2)\\
0
\end{pmatrix}
\\[0.2cm]
\frac{\partial^2 g}{\partial y_{2}^2}
(\by)&=\begin{pmatrix}2y_1^2\sgn(y_1) \phi(\frac{y_1}{c})\alpha'' ( y_2)\\
0
\end{pmatrix}
\\[0.2cm]
\frac{\partial^2 g}{\partial y_{1} \partial y_{2} }
(\by)&=\begin{pmatrix}
\SBRA{4y_1\sgn(y_1) \phi(\frac{y_1}{c})+\frac{2}{c}y_1^2\sgn(y_1)\phi'\PAR{\frac{y_1}{c}}}\alpha'(y_2)\\
0
\end{pmatrix}.
\end{align*}

We observe that $g$ is a function of class $\cC^1$ on $\dR^2$, with second derivatives on $\dR^*\times\dR$. We also note that $\frac{\partial^2 g}{\partial y_{2}^2}$ and $\frac{\partial^2 g}{\partial y_{1} \partial y_{2} }$ are continuous on $\dR^2$, and $\frac{\partial^2 g}{\partial y_{1}^2}$ is continuous on $\dR^*\times \dR$ with $\sup_{\{\by|y_1 \not=0\}} |\frac{\partial^2 g}{\partial y_{1}^2}(\by)|<\infty$ ($\alpha$ being bounded). 
By \cite[Theorem 2.9]{LTS15},  we deduce that for all $t \geq 0$

\begin{equation}\label{eq:ItoY}
g(\bY_t)=g(\bY_0)+\sum_{i=1}^2\int_{0}^t \frac{\partial g}{\partial y_i}dY_{s}^i +\frac{1}{2} \sum_{i,j=1}^2\int_{0}^t \frac{\partial^2 g}{\partial y_i\partial y_j}d[Y^i ,Y^j]_s 
\end{equation}

Using the system \eqref{eqY1}-\eqref{eqY2} of SDE satisfied by $\bY$, we observe  the "potential" discontinuous terms in the drift of $g(\bY)$ come from $\frac{\partial g}{\partial y_1}(\by)
\PAR{b^1(\tau(\by))-b^2(\tau(\by))}$  and \\
$\frac{\partial^2 g}{\partial y_1^2}(\by)\PAR{\sigma_1(\tau(\by))^2+\sigma_2(\tau(\by))^2}$. 
Focusing on the first term of $\frac{\partial g}{\partial y_1}$ and $\frac{\partial^2 g}{\partial y_1^2}$ in Itô's formula \eqref{eq:ItoY}, the drift term of $g(\bY)$ is of the form $\nu(\by)=\begin{pmatrix}\nu_1(\by)\\\nu_2(\by)\end{pmatrix}$ with
\begin{align*}
\nu_1(\by)&=
\frac{1}{\sqrt{2}}\PAR{b^1\circ\tau(\by)-b^2\circ\tau(\by)}\\
&+\sgn(y_1)\phi\PAR{\frac{y_1}{c}}\alpha(y_2)\PAR{\PAR{\sigma_1\circ\tau(y)}^2+\PAR{\sigma_2\circ\tau(y)}^2}+\nu_c(\by)\\
\nu_2(\by)&=
\frac{1}{2}\PAR{b^1\circ\tau(\by)+b^2\circ\tau(\by)}=b_1\PAR{-\frac{\ABS{y_1}}{\sqrt{2}}+y_2}+b_2\PAR{\frac{\ABS{y_1}}{\sqrt{2}}+y_2}
\end{align*}
where $\nu_c$ is a continuous function on $\dR^2$, with $\lim_{y_1\to 0}\nu_c(\by)=0$.

By assumption, $\nu_1$ is continuous on $\dR^2\setminus\BRA{y_1=0}$ and $\nu_2$ is continuous on $\dR^2$. Moreover, 
\begin{align*}
\lim_{y_1\to 0^-}\nu_1(\by)&=\frac{1}{\sqrt{2}}\PAR{b_1(y_2)-b_2(y_2)}-\alpha(y_2)\PAR{\PAR{\sigma_1(y_2)}^2+\PAR{\sigma_2(y_2)}^2}=0\\
\lim_{y_1\to 0^+}\nu_1(\by)&=\frac{1}{\sqrt{2}}\PAR{b_2(y_2)-b_1(y_2)}+\alpha(y_2)\PAR{\PAR{\sigma_1(y_2)}^2+\PAR{\sigma_2(y_2)}^2}=0,
\end{align*}
by Definition \eqref{eq:def-alpha} of $\alpha$. Then $\nu$ is continuous on $\dR^2$, which concludes the proof of this step.

\begin{quote}
    \underline{Step 3}: $\bZ$ satisfies an SDE with locally Lipschitz-continuous coefficients on $\dR^2$.
\end{quote}

The proof is almost identical to the proof of \cite[Theorem 3.20]{Leobacher-Szolgyenyi2017} except we have not assumed that $b$ is bounded. Consequently, the coefficients of the SDE related to $\bZ$ are locally Lipschitz-continuous functions on $\dR^2$, and not Lipschitz-continuous (even when $b$ is Lipschitz-continuous).
We are presenting this proof nonetheless for the sake of exhaustiveness and because it is fairly short. 

\medskip 
As Itô's formula holds for $G$, the process $\bZ=G(\bX)$  satisfies an SDE
\begin{equation}\label{eq:Z}
\dd \bZ_t=\tilde b(\bZ_t)\dd t+\tilde \sigma(\bZ_t)\dd W_t,
\end{equation}
where the drift term $\tilde b=\begin{pmatrix}\tilde b_1\\\tilde b_2\end{pmatrix}$ 
is given by, $k\in\BRA{1,2}$,
\begin{equation}\label{eq:b-tilde}
\tilde b_k(\bz)=G'_k(G^{-1}(\bz))b(G^{-1}(\bz))+\frac{1}{2}Tr\PAR{\sigma^\top(G^{-1}(\bz))G_k''(G^{-1}(\bz))\sigma(G^{-1}(\bz))}
\end{equation}
where $G'_k=\begin{pmatrix}\frac{\partial G_{k}}{\partial x_1}&\frac{\partial G_k}{\partial x_2} \end{pmatrix}$ is the   Jacobian matrix and 
$G''_k=\begin{pmatrix}\frac{\partial^2 G_{k}}{\partial x_1^2}&\frac{\partial^2 G_k}{\partial x_1x_2}
 \\\frac{\partial^2 G_{k}}{\partial x_1x_2}&\frac{\partial^2 G_k}{\partial x_2^2}
\end{pmatrix}$ is the Hessian matrix of the $k^\text{th}$-coordinate $G_k$ of the function $G$,
and the diffusion term is given  by
\begin{equation}\label{eq:sigma-tilde}
\tilde \sigma(\bz)=
G'(G^{-1}(\bz))\sigma(G^{-1}(\bz)).
\end{equation}

From the previous step, we know that $\tilde b$ is a continuous function on $\dR^2$. We also know that $G(\bx)=\bx$ for $x\notin \Theta_c$. Then $\tilde b(\bz)=b(\bz)$ for $z\notin \Theta_c$. By assumption on $b$, we deduce that $\tilde b$ is intrinsic Lipschitz on $\dR^2\setminus\Theta_c$.

We now study $\tilde b$ on $\Theta_c\setminus\Theta$. The function $G'=\begin{pmatrix}G'_1\\G'_2\end{pmatrix}$, given by \eqref{eq:G'1}-\eqref{eq:G'2}. The function $G^{-1}$ is Lipschitz-continuous on $\dR^2$, $G'$ is bounded and intrinsic Lipschitz-continuous on $\Theta_c\setminus\Theta$ ($G'$ is differentiable on $\Theta_c\setminus\Theta$ with bounded derivatives, see \cite[Lemma 3.8]{Leobacher-Szolgyenyi2017}), and $b$ is locally bounded by Assumption $(A_\PAR{b,\sigma})$, consequently we deduce that $z\mapsto G'_k(G^{-1}(\bz))b(G^{-1}(\bz))$  is locally intrinsic Lipschitz-continuous on $\Theta_c\setminus\Theta$ (see \cite[Lemma 3.9]{Leobacher-Szolgyenyi2017}).

Similarly, $G''$ is differentiable on $\Theta_c\setminus\Theta$ with bounded derivatives, and consequently intrinsic Lipschitz-continuous on $\Theta_c\setminus\Theta$. As $G''$ is bounded and $\sigma$ locally Lipschitz-continuous on $\dR^2$, we also conclude that the second term of $\tilde b$, in \eqref{eq:b-tilde}, is  locally Lipschitz-continuous on $\dR^2$. By continuity of $\tilde b$, we conclude that $\tilde b$ is locally Lipschitz-continuous \cite[Lemmas 3.6 and 3.11]{Leobacher-Szolgyenyi2017}.

As $\sigma$ is locally Lipschitz-continuous on $\dR^2$, we also easily conclude
that $\tilde\sigma$, given by \eqref{eq:sigma-tilde},  is locally Lipschitz-continuous on $\dR^2$.
\end{proof}

\subsection{Proof of Theorem~\ref{thm:main result}}\label{sec:proof main thm}

We are now ready to present the proof of Theorem~\ref{thm:main result}.

\begin{proof}[Proof of Theorem~\ref{thm:main result}] 
By Theorem \ref{thm:SDE-Z}, $Z$ is solution of SDE with Locally Lipschitz-coefficients. We deduce by  \cite[Theorem 1.1.8]{Hsu2002} that there is strong existence and uniqueness of the process $Z$ with initial condition $\bZ_0:=G(\bX_0)$, up to a potential explosion time $e(Z)$. Recall that $G$ is a $\cC^1$-diffeomorphism by Theorem \ref{prop:G-diffeo}. Since Itô's formula holds for $G^{-1}$ (see Step 1 in the proof of Theorem \ref{thm:SDE-Z}) and $G^{-1}(\bZ)$ is solution of \eqref{eq:EDSX}, the conclusion follows.
\end{proof}

\section{Rank-based interacting systems in higher dimensions}\label{sec: weak result}

In this section, we  first prove the weak well-posedness of the systems \eqref{eq:EDS-poisson} and \eqref{eq:EDS-poisson-2} in any dimension $N\geq 2$, under good assumptions, including the uniform ellipticity and weak conditions on the drift terms. Then we study the positivity of the solution when the diffusion coefficients vanish at zero.

In this section, we denote the set $\BRA{1,\ldots, N}$ by $\DSBRA{1,N}$.

\subsection{Weak well-posedness of the SDE~\eqref{eq:EDS-poisson} with positive and constant diffusion coefficients}\label{sec:weak existence 1}

 We consider the following specific case of the rank-based interacting diffusion model~\eqref{eq:EDS-poisson}, defined by
\begin{align}\label{eq:gene-atlas}
    \dd X^{i}_t&=\sum_{k=1}^Nb_k(X^{i}_t)\ind_{X^{i}_t=X^{(k)}_t}\dd t
    +\sum_{k=1}^N\sigma_k\ind_{X^{i}_t=X^{(k)}_t}\dd W^i_t
\end{align}
where $\PAR{W^i}_{1\leq i\leq N}$ are $N$ independent Brownian motions, and $X^{(1)},X^{(2)},\ldots,X^{(N)}$ are the order statistics of the N-tuple $\bX=(X^{1},X^{2},\ldots,X^{N})$. We first notice that the system \eqref{eq:gene-atlas} is well defined because the $X^i$ are distinct almost surely.

Using the notations of Bass and Pardoux, \cite{BP87}, Equation \eqref{eq:gene-atlas} can be written 
\begin{align*}
     \dd X^{i}_t=\sum_{k=1}^Nb_k(X^{i}_t)\ind_{\cQ^{i}_{(k)}}\PAR{\bX_t}\dd t
    +\sum_{k=1}^N\sigma_k\ind_{\cQ^{i}_{(k)}}\PAR{\bX_t}\dd W^i_t
\end{align*}

where $\bx=(x^1,\ldots,x^N)\in \cQ^{i}_{(k)}$ means that $x^i$ is ranked $k$th among $x^1,\ldots, x^N$:
\begin{align*}
   \cQ^{i}_{(1)}&=\BRA{\bx\in\dR^N: x^i=\min_{j\in\BRA{1,\ldots,N}}x^j } \\
   \cQ^{i}_{(N)}&=\BRA{\bx\in\dR^N: x^i=\max_{j\in\BRA{1,\ldots,N}}x^j }\\
    \cQ^{i}_{(k+1)}&=\BRA{\bx\in\dR^N: x^i>\max_{1\leq r\leq k}x^{j_r}\text{ for some }j_1,\ldots, j_k,\text{ and }x^i= \min_{j\notin\BRA{j_1,\ldots, j_k}}x^j }.
\end{align*}
We easily observe that for any fixed $k\in\DSBRA{1,N}$, $\PAR{\cQ^{i}_{(k)}}_{1\leq i\leq N}$ is a partition of $\dR^N$, and for any fixed $i\in\DSBRA{1,N}$, $\PAR{\cQ^{i}_{(k)}}_{1\leq k\leq N}$ is also a partition of $\dR^N$.

The infinitesimal generator of the system \eqref{eq:gene-atlas} is 
\begin{equation}\label{eq:generator}
\cA f(\bx)={1\over 2}\sum_{i,j=1}^N A_{ij}(\bx){\partial^2f(\bx) \over \partial x_i\partial x_j}+\sum_{i=1}^N B_i(\bx){\partial f(\bx)\over \partial x_i}
\end{equation}
where 
\begin{itemize}
\item $A=\PAR{A_{ij}}_{1\leq i,j\leq N}=\eta^t\eta$ with $\eta=\PAR{\eta_1,\cdots,\eta_N}$ and $\eta_i(\bx)=\sum_{k=1}^N\sigma^N_k\ind_{\cQ^i_{(k)}}(\bx)$,
\item $B_i(\bx)=\sum_{k=1}^Nb_k(x^i)\ind_{\cQ^i_{(k)}}(\bx)$.
\end{itemize}

\begin{prop}\label{prop:wellposed}
We assume that $\sigma_k^N>0$ and that
 $b_k^N:\dR\to \dR$ are locally bounded measurable functions with at most linear growth at infinity  for all $k\in\DSBRA{1,N}$. Let $\bx_0\in\dR^N$.
 Then there exists a solution to the martingale problem for $\cL$ starting at $\bx_0$ and that solution is unique.
\end{prop}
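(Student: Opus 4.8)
The plan is to invoke the theory of Bass and Pardoux \cite{BP87} on martingale problems for diffusions with piecewise-constant diffusion coefficients, since Proposition~\ref{prop:wellposed} is tailored precisely to fit their framework. The key observation is that the diffusion matrix $A=\eta^t\eta$ is \emph{piecewise constant}: the regions $\cQ^i_{(k)}$ are polyhedral cones (defined by finitely many linear inequalities among the coordinates), they partition $\dR^N$, and on each atom of the common refinement of all these partitions the vector $\eta$ — and hence $A$ — takes a constant value, since $\eta_i(\bx)=\sum_k\sigma^N_k\ind_{\cQ^i_{(k)}}(\bx)$ depends on $\bx$ only through the rank ordering. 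First I would verify that this piecewise-constant structure, together with the strict positivity $\sigma^N_k>0$, places $A$ in the class covered by \cite{BP87}: one checks that $A$ is uniformly elliptic and bounded. Uniform ellipticity follows because on each ranking region $\eta$ is a fixed permutation-type assignment of the strictly positive constants $\sigma^N_1,\dots,\sigma^N_N$ to the coordinates, so $A$ is diagonal with entries $(\sigma^N_{k})^2$ bounded below by $\min_k(\sigma^N_k)^2>0$ and above by $\max_k(\sigma^N_k)^2<\infty$.

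Next I would treat the drift. The Bass–Pardoux result establishes existence and uniqueness for the martingale problem associated with the operator $\frac12\sum A_{ij}\partial^2_{ij}$ when $A$ is piecewise constant and uniformly elliptic; the drift is then incorporated by a perturbation argument. The standard route is \emph{Girsanov's transformation}: having obtained a solution $\bX$ to the driftless (or constant-coefficient reference) martingale problem, one adds the drift $B$ by an absolutely continuous change of measure, provided $B$ satisfies a Novikov-type integrability condition. Here $B_i(\bx)=\sum_k b^N_k(x^i)\ind_{\cQ^i_{(k)}}(\bx)$ is measurable and locally bounded with at most linear growth; the at-most-linear-growth hypothesis is exactly what guarantees non-explosion and the requisite exponential integrability of the Radon–Nikodym density, so that Girsanov applies to produce both existence and uniqueness of the solution to the full martingale problem for $\cL$. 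I would cite the relevant non-explosion / Girsanov lemma (a linear-growth drift combined with a uniformly elliptic bounded diffusion coefficient yields a well-defined exponential martingale and no blow-up in finite time), and note that uniqueness transfers back and forth under the equivalent change of measure.

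The main obstacle — and the point deserving the most care — is confirming that the diffusion coefficient genuinely satisfies the precise hypotheses of \cite{BP87}, rather than merely being ``piecewise constant'' in an informal sense. The subtlety is the behavior on the boundaries between ranking regions, i.e.\ on the set $\Theta=\{\bx:\exists\,i\neq j,\ x_i=x_j\}$ where two or more coordinates coincide: the indicators $\ind_{\cQ^i_{(k)}}$ are discontinuous there, and one must check that the measure-zero overlap or ambiguity on $\Theta$ does not affect well-posedness. For the martingale-problem formulation this is benign — the solution process spends Lebesgue-a.e.\ time off $\Theta$ (indeed the claim that the system is ``well defined because the $X^i$ are distinct almost surely'' records exactly this), so the value of $A$ on the null set $\Theta$ is immaterial and the piecewise-constant structure on the open ranking cones is what matters. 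I would therefore emphasize that the geometry of the $\cQ^i_{(k)}$ as finitely many polyhedral cones meeting along the lower-dimensional set $\Theta$ is precisely the configuration handled in \cite{BP87}, invoke their existence-and-uniqueness theorem for the pure second-order part, and close with the Girsanov step to fold in the linearly-growing drift.
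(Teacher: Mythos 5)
Your proposal is correct and follows essentially the same route as the paper: both reduce to the driftless martingale problem via a Girsanov/drift-removal argument combined with localization (the paper cites \cite[Theorems 6.4.3 and 10.2.2]{SV06} for exactly this step) and then invoke Bass--Pardoux \cite[Theorem 2.1]{BP87} for the bounded, uniformly elliptic, piecewise-constant diffusion matrix $A$. Your additional discussion of the behaviour on the coincidence set $\Theta$ is a reasonable extra precaution but is already subsumed by the hypotheses of the Bass--Pardoux theorem, which allows measurable coefficients of this polyhedral piecewise-constant type.
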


\begin{proof}
The diffusion coefficients being positive, we easily observe that $A$ is a bounded and uniformly positive definite measurable function: $\exists \lambda>0$: $\forall \bx,\by\in\dR^N$, $\by^tA(\bx)\by\geq \lambda \NRM{y}^2$.

As $B$ is locally bounded, thanks to \cite[Theorems 6.4.3 and 10.2.2]{SV06}, it suffices to consider the case where $B\equiv 0$. 
The existence of a solution to the martingale problem with $B\equiv 0$ is given by \cite[Exercise 12.4.3]{SV06}.

As the matrix $A$ satisfies the assumptions of Bass and Pardoux, we deduce by \cite[Theorem 2.1]{BP87} the martingale problem for $\cL$ starting at $\bx_0$ is well-posed.
\end{proof}

\subsection{Weak well-posedness of the  SDE~\eqref{eq:EDS-poisson-2}}\label{sec:weak existence 2}

Using the notation of the previous section,
Equation \eqref{eq:EDS-poisson-2} can be written in the following way 
\begin{align*}
     \dd X^{i}_t=\sum_{k=1}^N b_k(X^{i}_t)\ind_{\cQ^{i}_{(k)}}\PAR{\bX_t}\dd t
    +\sigma_i(X^{i}_t)\dd W^i_t.
\end{align*}
and then its infinitesimal generator is given by
\begin{equation}\label{eq:generator}
\cB f(\bx)={1\over 2}\sum_{i,j=1}^N A_{i,j}(\bx){\partial^2f(\bx) \over \partial x_i\partial x_j}+\sum_{i=1}^N B_i(\bx){\partial f(\bx)\over \partial x_i}
\end{equation}
with $A_{ij}(\bx)=0$ for $i\neq j$ and $A_{ii}(\bx)=\sigma_i^2(x_i)$, and $B_i(\bx)=\sum_{k=1}^Nb_k(x^i)\ind_{\cQ^i_{(k)}}(\bx)$ 

\begin{prop}\label{prop:wellposed-2}
Let $\bx_0\in\dR^N$. We assume that $A$ is positive  and $\sigma_i$ is Hölder-continuous of order $\eta\geq \frac{1}{2}$ on $\dR$, for all $ i\in\DSBRA{1, N}$. 
We also assume that  $b_i:\dR\to \dR$ is a locally bounded measurable function, with at most linear growth at infinity, for all $i\in\DSBRA{1,N}$.

Then, there exists a solution to the martingale problem for $\cB$ starting at $\bx_0$ and that solution is unique.
\end{prop}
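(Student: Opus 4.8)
The plan is to follow the same two-stage strategy as in the proof of Proposition~\ref{prop:wellposed}: first remove the discontinuous drift by a Girsanov change of measure, and then solve the remaining driftless martingale problem, whose generator is now diagonal and coordinate-wise. The one genuinely new feature compared with Proposition~\ref{prop:wellposed} is that the diffusion matrix is state-dependent and only H\"older-$\frac12$, so the theorem of Bass and Pardoux is no longer available; the idea is to compensate for this by exploiting the diagonal structure $A_{ij}=0$ ($i\neq j$), $A_{ii}=\sigma_i^2(x_i)$, which lets us reduce the driftless problem to $N$ genuinely one-dimensional ones.

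First I would reduce to the case $B\equiv 0$. Since each $b_i$ is locally bounded, $B$ is locally bounded and measurable; and since each $\sigma_i$ is continuous and $A$ is positive, on every bounded set $\bar U$ the matrix $A$ is bounded and uniformly elliptic (each $\sigma_i^2$ being bounded away from $0$ on $\bar U$). Exactly as in Proposition~\ref{prop:wellposed}, Theorems~6.4.3 and~10.2.2 of \cite{SV06} then reduce the problem to the case $B\equiv 0$: the localization theorem allows one to work on bounded regions, and on each such region the Girsanov density $\exp\PAR{\int\theta\,\dd W-\tfrac12\int\NRM{\theta}^2\dd s}$ with $\theta=\sigma^{-1}B$ is a genuine martingale, because $B$ and $\sigma^{-1}$ are bounded on $\bar U$. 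Note that global H\"older continuity of order $\eta\in[\tfrac12,1]$ forces $\sigma_i$ to have at most linear growth, which together with the linear growth of $b$ rules out explosion, so the local solutions patch into a single global one.

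It then remains to prove well-posedness of the martingale problem for $\cB$ with $B\equiv 0$, whose generator is $\tfrac12\sum_{i=1}^N\sigma_i^2(x_i)\,\partial^2/\partial x_i^2$, a sum of operators each acting on a single coordinate. I would show that this driftless problem decouples into $N$ independent one-dimensional problems: any solution satisfies $[X^i,X^j]\equiv 0$ for $i\neq j$ and, testing against functions of a single variable, each marginal solves the one-dimensional martingale problem for $\tfrac12\sigma_i^2\,\mathrm d^2/\mathrm dx^2$; the vanishing cross-variations then identify the joint law as the product of the one-dimensional laws, while conversely the product of one-dimensional solutions solves the $N$-dimensional problem. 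Each one-dimensional problem is well-posed: existence holds since $\sigma_i$ is continuous with at most linear growth (see e.g.\ \cite{IW89,SV06}), and uniqueness in law follows from Yamada–Watanabe pathwise uniqueness \cite{IW89}, which applies precisely because $\sigma_i$ is H\"older-continuous of order $\eta\geq\tfrac12$. Combined with the reduction of the first step, this yields existence and uniqueness for $\cB$.

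The main obstacle is the decoupling step: one must check carefully that well-posedness of each coordinate's one-dimensional martingale problem implies well-posedness of the full $N$-dimensional driftless problem. Existence via the product measure is immediate, but the uniqueness direction requires reconstructing the joint law from the marginals, using the orthogonality $A_{ij}=0$. This is also the step where the H\"older-$\tfrac12$ assumption is essential, entering through Yamada–Watanabe, and it is precisely what replaces the appeal to Bass and Pardoux made in Proposition~\ref{prop:wellposed}.
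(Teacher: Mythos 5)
Your proposal is correct and follows essentially the same route as the paper: localization and removal of the locally bounded drift via Theorems~6.4.3 and~10.2.2 of \cite{SV06}, existence for the driftless bounded-coefficient problem from \cite[Exercise 12.4.3]{SV06}, and uniqueness from Yamada--Watanabe pathwise uniqueness using the H\"older-$\tfrac12$ regularity of the $\sigma_i$. The only cosmetic difference is that the paper cites \cite[Theorem 1]{YW71} directly for the diagonal $N$-dimensional system rather than spelling out the coordinate-wise decoupling and product-law reconstruction you describe, but these amount to the same argument.
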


\begin{proof}
  
    The assumptions of Theorem~\ref{prop:wellposed-2} imply that
    \begin{itemize}
        \item $B: \dR^N\to \dR^N$ is a measurable function, with at most linear growth  at infinity: there exists $K>0$ such that $\ds{\bx^tB(\bx)\leq K(1+\ABS{\bx}^2)}$;
        \item $A:\dR^N\to \dR^N\times\dR^N$ is a continuous function with values in the set of symmetric positive definite matrices, with at most linear growth at infinity: $\exists \lambda>0$: $\forall \bx,\by\in\dR^N$, $\by^tA(\bx)\by\geq \lambda \NRM{y}^2$.
    \end{itemize}
     Let $A_n$ and $B_n$ bounded functions satisfying the same assumptions as $A$ and $B$, such that $A_n=A(x)$ and $B_n(x)=B(x)$ on $\cB_n=\BRA{x\in\dR^N:\NRM{x}\leq n}$. By \cite[Theorem 6.4.3]{SV06}, well-posedness of the martingale problem with coefficients $(A_n,B_n)$ is equivalent to well-posedness of the martingale problem with coefficients $(A_n,0)$. 
     As $A$ is continuous, we note that $A_n$ is bounded. Then the existence of a solution to the martingale problem is given by \cite[Exercise 12.4.3]{SV06}.\\ 
     By \cite[Theorem 1]{YW71}, when the functions $\sigma_i$ are a $\alpha$-Hölderian function, with $\alpha\geq 1/2$, there is uniqueness to the martingale problem $(A_n,0)$.
     \\
     Finally,  by \cite[Theorem 10.2.2]{SV06}, we deduce that the martingale problem with coefficient $(A,B)$ is well-posed.
\end{proof}

\begin{rk}  As we can see in \cite[Theorem 6.4.2]{SV06}, the positivity of the matrix $A$ is crucial in the proof to use \cite[Theorem 6.4.3]{SV06}.
\end{rk}

\subsection{Non-negativity of the solution}\label{App:nonnegativity}

Let $N\geq 1$. 
In this section, we consider the interacting model with drift and diffusion coefficients depending on the rank defined by 
\begin{align}\label{eq:EDS-poisson1}
    \dd X^{i}_t&=\sum_{k=1}^N b( X^{i}_t)\ind_{X^{i}_t=X^{(k)}_t}\dd t
    +\sum_{k=1}^N \sigma_k (\max( X^{i}_t,0))\ind_{X^{i}_t=X^{(k)}_t}\dd W^i_t,
\end{align}
and the rank-based interacting diffusion model with only drift coefficients depending on the rank defined by
\begin{align}\label{eq:EDS-poisson2}
    \dd X^{i}_t&=\sum_{k=1}^N b( X^{i}_t)\ind_{X^{i}_t=X^{(k)}_t}\dd t
    +\sigma_i (\max( X^{i}_t,0))\dd W^i_t,
\end{align}
where $\bX_0=(X_0^{1},\ldots, X_0^{N})$ with $0<X_0^{1}<X_0^{2}<\ldots<X_0^{N}$,  
$\PAR{W^i}_{1\leq i\leq N}$ are $N$ independent Brownian motions, and $X^{(1)},X^{(2)},\ldots,X^{(N)}$ are the order statistics of the N-tuple $\bX=(X^{1},X^{2},\ldots,X^{N})$.

For a vector $\bx=(x^1,\ldots,x^N)\in\dR^N$, we use the notation $\bx\geq 0$ when $x^i\geq 0$ for all  $i\in\DSBRA{1,N}$.

\begin{prop}
We assume $b_i $ are continuous functions on $\dR$ and are ordered at $zero$: $0\leq  b_1(0)\leq b_2(0)\leq \cdots\leq b_N(0)$, and $\sigma_i$ are continuous functions such that $\sigma_i (0)=0$ and $\forall x>0$ $\sigma_i(x)>0$. We assume that there is a unique strong solution $\bX$ to \eqref{eq:EDS-poisson1} (respectively \eqref{eq:EDS-poisson2}) with an initial condition satisfying $0<X_0^{1}<X_0^{2}<\ldots<X_0^{N}$ a.s. Then the solution to \eqref{eq:EDS-poisson1} (respectively \eqref{eq:EDS-poisson2}) satisfies  for all $t\geq0$, $\bX_t\geq 0$ a.s. 
\end{prop}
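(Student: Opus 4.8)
The plan is to show that the nonnegative orthant $\dR_+^N$ is invariant for the diffusions \eqref{eq:EDS-poisson1} and \eqref{eq:EDS-poisson2}, exploiting the fact that the diffusion coefficients vanish at zero and the drift is nonnegative there. The natural strategy is to prove that each coordinate $X^i$ cannot cross into the negative region, and the cleanest way to organize this is to track the \emph{lowest-ranked} particle, i.e. the running minimum $X^{(1)}_t=\min_i X^i_t$, since by the ordering assumption the first coordinate to reach $0$ must be the one currently occupying rank $1$. Because the labels carrying a given rank can switch, I would avoid working with a fixed index and instead argue directly on $X^{(1)}$, or equivalently introduce the stopping time $\tau=\inf\{t\geq 0: X^{(1)}_t\leq 0\}$ and show $\dP(\tau<\infty)=0$.

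First I would establish the local behaviour near the boundary. On the event $\{X^{(1)}_t=0\}$ the particle at rank $1$ has drift $b(0)$, which by hypothesis satisfies $b_1(0)=b(0)$ at that coordinate (here $b$ is the common drift up to rank-indexing; under the stated hypotheses $0\le b_1(0)\le\cdots\le b_N(0)$ the rank-$1$ drift is the smallest but still $\geq 0$), and diffusion coefficient $\sigma_1(\max(\cdot,0))$ which equals $\sigma_1(0)=0$ at zero. Thus at the boundary the dynamics of the minimal particle are $\dd X^{(1)}_t=b(0)\,\dd t\geq 0$ with vanishing noise, so the drift pushes the process back into the interior and the noise cannot kick it out. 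The main step is to convert this heuristic into a rigorous comparison. I would compare $X^{(1)}$ (or the candidate crossing coordinate) against the deterministic or one-dimensional solution started at $0$: since $\sigma_i(\max(x,0))=0$ for $x\le 0$, once extended by $0$ to the negative axis the coefficients are such that $y\equiv 0$ is a solution of the scalar comparison SDE absorbed at the boundary, and a standard comparison theorem for one-dimensional SDEs (e.g. Yamada–Watanabe / Ikeda–Watanabe comparison, valid under the Hölder-$\tfrac12$ continuity of $\sigma_i$ and continuity of $b$) yields $X^{(1)}_t\geq 0$ for all $t$.

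The technical care goes into handling the rank switching and the reflection-type term at collisions. The cleanest route is a localization: on each excursion, up to the first collision time between the two lowest-ranked particles the minimal particle carries a fixed label and satisfies an honest one-dimensional SDE $\dd X^{(1)}_t=b(X^{(1)}_t)\,\dd t+\sigma_1(\max(X^{(1)}_t,0))\,\dd W_t$ (for \eqref{eq:EDS-poisson2} the diffusion is $\sigma_i(\max(\cdot,0))$ with the active label $i$, but the vanishing-at-zero and positivity-off-zero properties are identical), to which the comparison argument applies; at a collision time the identity of the rank-$1$ particle may change but the value $X^{(1)}$ is continuous and still $\geq 0$, and the ordering of the drifts $b_1(0)\le\cdots\le b_N(0)$ guarantees that the incoming particle does not have a more negative drift, so the nonnegativity is preserved across the switch. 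Patching these excursions together by the strong Markov property and continuity of trajectories gives $X^{(1)}_t\geq 0$, hence $\bX_t\geq 0$, for all $t\geq 0$ almost surely.

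The hard part will be making the rank-switching argument fully rigorous: one must ensure that the comparison bound is not lost at the (possibly infinitely many, accumulating) collision times, and that the scalar comparison theorem genuinely applies even though the effective diffusion coefficient of the rank-$1$ particle is only piecewise continuous due to the indicator structure. I expect this to be controllable because only the boundary behaviour at $0$ matters for nonnegativity, and there the common vanishing $\sigma_i(0)=0$ together with $b(0)\geq 0$ makes $0$ an entrance-type (non-crossable from above) boundary uniformly over which label sits at rank $1$; the ordering hypothesis on the $b_i(0)$ is precisely what removes any ambiguity at the switches.
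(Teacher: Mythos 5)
There is a genuine gap. Your central tool is a one-dimensional comparison theorem of Yamada--Watanabe / Ikeda--Watanabe type, which you justify by ``H\"older-$\tfrac{1}{2}$ continuity of $\sigma_i$''. But the proposition only assumes the $\sigma_i$ to be continuous, vanishing at $0$ and positive on $(0,\infty)$; no H\"older regularity is available, so that comparison theorem does not apply. What the statement gives you instead is the \emph{assumed} strong uniqueness of the particle system, and the paper's proof leans on exactly that, together with the fact that $\sigma_k(\max(x,0))=\sigma_k(0)=0$ identically for $x<0$, so that a coordinate strictly below zero follows a deterministic ODE with no noise at all. Relatedly, your claim that $y\equiv 0$ solves the scalar comparison SDE is only true when the relevant drift vanishes at $0$; when $b_i(0)>0$ it does not, and the two situations genuinely require different arguments. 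The paper splits accordingly: if all $b_i(0)>0$, a path reaching $-\varepsilon$ would have to be increasing on its final approach (pure positive drift, zero diffusion, below zero) --- a contradiction; if instead $b_1(0)=0$, the coordinate that first hits $0$ is frozen there by strong uniqueness (since $b_1(0)=\sigma_1(0)=0$) and one inducts on $N$ after removing it.

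Second, the rank-switching difficulty you flag at the end is not resolved, and it is the actual hard part of your route: the minimum $X^{(1)}$ is not an honest scalar diffusion (Tanaka's formula produces local-time terms at collisions), collision times may accumulate, and the driving Brownian motion of the rank-one particle changes identity at each switch, so ``patching excursions by the strong Markov property'' is a program rather than a proof. The paper sidesteps the issue entirely by arguing on a \emph{fixed label} $j$: whenever $X^j_t<0$, its diffusion coefficient equals $\sigma_k(0)=0$ whatever its current rank $k$, and its drift is $b_k(X^j_t)$, which is positive in a neighbourhood of $0$ in the first case, so no control of the rank process is needed. I would recommend abandoning the comparison-theorem route in favour of this observation, supplemented by the induction on $N$ for the degenerate case $b_1(0)=0$.
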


\begin{rk}
Instead of assuming that the functions $b_i$ are ordered at zero, we may alternatively consider that either all $b_i(0)>0$, or that $b_1(0)=0$, as we can see in the proof below.
\end{rk}

\begin{proof}

We only provide proof for \eqref{eq:EDS-poisson1}, since the arguments are exactly the same for \eqref{eq:EDS-poisson2}.

The proof is inspired by the one in \cite[Chap. IV - Example 8.2]{IW89}.
\begin{quote}
  \underline{First case} : 
 We assume that $\forall i\in\DSBRA{1,N}$, $b_i (0)> 0$.

\end{quote}

By continuity of the functions, let $\varepsilon>0$ be such that for all $i\in \DSBRA{1,N}$,  $\sup_{x\in[-\varepsilon, \varepsilon]} b_i (x)>0$

We introduce $\tau_{\varepsilon}=\inf\BRA{t\geq 0: \exists i\in\DSBRA{1,N} : X^{i}_t =-\varepsilon}$ and $J_\varepsilon=\BRA{i\in\DSBRA{1,N}: X^{i}_{\tau_\epsilon} =-\epsilon }$ (with the convention $\inf\emptyset=+\infty$).

We now prove by contradiction that $\tau_\varepsilon=+\infty$ a.s.. Assume that $\dP(\tau_{\varepsilon}<\infty)>0$. Let $j\in J_\varepsilon$ such that
$X^{j}_{\tau_{\varepsilon}} =-\varepsilon $. 
We take any $r<\tau_{\varepsilon}  $ such that  $X^{j}_t<0$ for all $j\in J_\varepsilon$ and $t\in(r,\tau_{\varepsilon})$. Such $r$ exists because there is at most $N$ such $j$. Then with probability one, we have
\[
\rd X^{j}_t =\sum_{k=1}^N b_k( X^{j}_t)\ind_{X^{j}_t=X^{(k)}_t}\dd t
\] 
on the interval $(r,\tau_{\varepsilon})$. Hence, by the choice of $\varepsilon$, for $j\in J_\varepsilon$, $t\mapsto X^{j}_t$ is increasing on this interval. This is clearly a contradiction with $X^{j}_{\tau_{\varepsilon}} =-\varepsilon $.  
Consequently, $\tau_\varepsilon=+\infty$ a.s., and thus $\bX_t>-\varepsilon$ a.s. for all $\varepsilon>0$ small enough. Taking a sequence $(\varepsilon_n)\in\mathbb{Q}^{\dN}$ converging to $0$, we deduce that $\bX_t\geq 0$ a.s. for $\forall t\geq 0$.

\begin{quote}
    \underline{Second case} :
There exists $i$ such that  $b_i(0)=0$.  
\end{quote}

Given that the functions $b_i$ are ordered at 0 and that at least one of them vanishes at 0, we can conclude that $b_1(0)=0$. 

We proceed by induction on the dimension $N$, and the process $\bX$ is denoted $\bX^N=\PAR{X^{1,N},\ldots, X^{N,N}}\in\dR^N$ in the sequel.
The initialization at $N=1$  is immediate using the same arguments as in \cite[Chap. IV - Example 8.2]{IW89}.

Assume that the statement holds for $N-1>0$; we now show that it also holds for $N$.
Let us note that $\bX^N_t>0$ for all $t\in[0,\tau)$ with $\tau =\inf\BRA{t\geq 0: \exists i\in\DSBRA{1,N} \, X^{i,N}_t =0}$.

On $\BRA{\tau=\infty}$, the result holds.
On $\BRA{\tau<\infty}$, we introduce $I=\BRA{i\in\DSBRA{1,N}: X^{i,N}_{\tau} =0}$.
There exists an $i_1\in I$ on $\BRA{\tau<\infty}$
associated with the drift coefficient $b_1$ and the volatility coefficient $\sigma_1$.

By the strong Markov property, on $\BRA{\tau<+\infty}$,
the process $\widetilde{\bX}^N_{t}:=\bX^N_{t+\tau}$ is a solution of (\ref{eq:EDS-poisson1}) starting from $\widetilde{\bX}^N_{0} = \bX^N_{\tau}$. 

Hence, as $b_{1} (0)=0$ and $\sigma_1(0)=0$, by strong uniqueness of the particle system, $\widetilde{X}^{i_1,N}\equiv 0$ a.s.   on $\BRA{\tau<\infty}$.

We now introduce the process $\widetilde\bX^{N-1}\in\dR^{N-1}$ obtained by removing the $i_1$-th coordinate of $\widetilde\bX^N$. The associated coefficients of the SDE satisfied by $\widetilde\bX^{N-1}$ are $b_2,\ldots,b_N$ and $\sigma_2,\ldots,\sigma_N$. 

If $b_2(0)>0$, by assumption it implies that $b_i(0)>0$ for all $i\in\DSBRA{2,N}$, then we can use the \emph{First case} of this proof to study the sign of $\widetilde\bX^{N-1}$, and the conclusion follows directly.

Otherwise, since the vector $\widetilde\bX^{N-1}$ is of dimension $N -1$, we conclude by induction for all $i\in\DSBRA{1,N}$ with $i\neq i_1$, that $\widetilde X^{i,N}_t\ge 0$ a.s.
Therefore, a.s. for all $i\in\DSBRA{1,N}$  and $t\geq 0$, $X^{i,N}_t\ge 0$, by uniqueness of the particle system.

\end{proof}

\noindent \textbf{Acknowledgements:} We would like to thank Fabrice Mahé for introducing us to deterministic models related to rainbow trout growth. We also thank Tomoyuki Ichiba for the fruitful discussions on the Atlas model in Cambridge. \\
HG would like to thank the Isaac Newton Institute for Mathematical Sciences, Cambridge, for its support and hospitality during the programme \emph{Stochastic systems for anomalous diffusion}.
HG acknowledges funding from the Natural Sciences and Engineering Research Council of Canada (NSERC) through its Discovery Grant (RGPIN-2020-07239).\\
This work was supported by the Research Institute for Mathematical Sciences,
an International Joint Usage/Research Center located at Kyoto University.

\appendix
\renewcommand{\theequation}{\Alph{section}.\arabic{equation}}
\let \sappend=\section
\renewcommand{\section}{\setcounter{equation}{0}\sappend}

\phantomsection
\addcontentsline{toc}{section}{\refname}%
\bibliographystyle{alpha}
\bibliography{biblio-poisson}

\end{document}